\documentclass[pdflatex,sn-mathphys-num]{sn-jnl}


\usepackage{graphicx}%
\usepackage{multirow}%
\usepackage{amsmath,amssymb,amsfonts}%
\usepackage{amsthm}%
\usepackage{mathrsfs}%
\usepackage[title]{appendix}%
\usepackage{xcolor}%
\usepackage{textcomp}%
\usepackage{manyfoot}%
\usepackage{booktabs}%
\usepackage{algorithm}%
\usepackage{algorithmicx}%
\usepackage{algpseudocode}%
\usepackage{listings}%


\theoremstyle{thmstyleone}%
\newtheorem{theorem}{Theorem}
%

\theoremstyle{thmstyletwo}%
\newtheorem{remark}{Remark}%

\theoremstyle{thmstylethree}%
\newtheorem{definition}{Definition}%

\raggedbottom

\begin{document}

\title[A Note on  Weak Post-Hopf Algebra Structures on the Sweedler Hopf Algebra]{A Note on  Weak Post-Hopf Algebra Structures on the Sweedler Hopf Algebra}


\author{\fnm{Q. G.} \sur{Chen}}\email{cqg211@163.com}

\affil{\orgdiv{College of Mathematics and Statistics}, \orgname{Kashi University}, \orgaddress{\street{Xuefu Avenue}, \city{Kaishi}, \postcode{844000}, \state{Xinjiang}, \country{China}}}


\abstract{By omitting the unitary constraint from the definition of weak post-Hopf algebras, we introduce the concept of relaxed weak post-Hopf algebras, offering a thorough characterization of all feasible relaxed weak post-Hopf algebraic structures on the Sweedler Hopf algebra. This work reveals a distinct class of relaxed weak post-Hopf algebraic configurations, diverging from previously established weak post-Hopf algebraic frameworks.}

\keywords{Lie algebra, post-Lie algebra, post-Hopf algebra, weak post-Hopf algebra}



\maketitle

\section{Introduction}\label{sec1}

The post-Lie algebra was initially introduced in [1],  boasting significant applications in geometric numerical integration [2,3]. Subsequently, in [5], Li, Sheng and Tang introduced the notion of post-Hopf algebra. In the formalization of post-Hopf algebras, the left multiplication $\alpha_{\rhd}: H\rightarrow$ End$(H)$ induced from $\rhd$ is convolution invertible. By removing the restriction that $\alpha_{\rhd}$ is convolution invertible, weak post-Hopf algebra was introduced in [4].

Typically, the unitarity condition of the actions is not invariably fulfilled. For example, consider the action of a Hopf algebra on an algebra through measurement and the concept of partial Hopf module actions, which emerged from the relaxation of the unitarity condition of actions in [6]. Naturally, a question arises: \textbf{if the unitarity condition is removed from the definition of weak post-Hopf algebras, would a distinct scenario unfold compared to that delineated in Theorem 3.1 of  [4]?}

This paper addresses this query by introducing the concept of relaxed weak post-Hopf algebras and providing a comprehensive delineation of all possible relaxed weak post-Hopf algebraic structures on the Sweedler Hopf algebra.

\section{Relaxed weak post-Hopf algebras}\label{sec2}

Throughout the paper, $\mathbf{k}$ shall denote an algebraically closed field of characteristic zero. All algebras and coalgebras under consideration are defined over $\mathbf{k}$ and the term “linear” is understood to mean $\mathbf{k}$-linear. Tensor products without explicit indication of the base field are assumed to be taken over $\mathbf{k}$. For an arbitrary coalgebra $(C, \Delta, \varepsilon)$,  we adopt the Sweedler-Heynemann notation for the comultiplication map $\Delta$, with the summation sign suppressed, as delineated below:
$$
\Delta(x)=x_{1}\otimes x_{2}.
$$

Let $H$ be a Hopf algebra. A group-like element of $H$ is a $g\in H$ which satisfies 
$$
\Delta(g)=g\otimes g,  \varepsilon(g)=1. 
$$
Denote by $G(H)$ the set of group-like elements in $H$, which is a group. For $g, h\in G(H)$, a $(g,h)$-primitive element is a $c\in H$ which satisfies 
$$
\Delta(c)=g\otimes c+c\otimes h. 
$$
Denote by $P_{g,h}(H)$ the subspace of $(g,h)$-primitive
elements in $H$ for $g, h\in G(H)$. Denote $P_{1,1}(H)$ by $P(H)$ the subspace of $(1,1)$-primitive elements ( primitive elements for short) in $H$, which is a Lie algebra.

 Now, we present the concept of a relaxed weak post-Hopf algebra by disregarding the unitary criterion of a weak post-Hopf algebra. 

Recall from [4] that a weak post-Hopf algebra is a pair $(H, \rhd)$, where
$H$ is a Hopf algebra and $\rhd: H\otimes H\rightarrow H$ is a coalgebra homomorphism satisfying the following equalities: 
\begin{equation}\label{eq1}
	x\rhd (yz)=(x_{1}\rhd y)(x_{2}\rhd z),
\end{equation}
\begin{equation}\label{eq2}
	x\rhd (y\rhd z)=(x_{1}(x_{2}\rhd y))\rhd z,
\end{equation}
\begin{equation}
	1\rhd x=x, 
\end{equation}
for any $x, y, z\in H$.

By ignoring the unitary condition, we introduce the following notion. 
\begin{definition}
	A relaxed weak  post-Hopf algebra is a pair $(H, \rhd)$, where 
	$H$ is a Hopf algebra and $\rhd: H\otimes H\rightarrow H$ is a coalgebra homomorphism satisfying the following equalities: 
	\begin{equation}\label{eq1}
		x\rhd (yz)=(x_{1}\rhd y)(x_{2}\rhd z),
	\end{equation}
	\begin{equation}\label{eq2}
		x\rhd (y\rhd z)=(x_{1}(x_{2}\rhd y))\rhd z,
	\end{equation}
	for any $x, y, z\in H$.
\end{definition}
 
 	Let $(H, \rhd)$ be a relaxed weak post-Hopf algebra. Then, for all $x\in H$, we have 
	\begin{equation}\label{eq9}
		x\rhd 1=\varepsilon(x)1. 
	\end{equation}

\section{The Main Theorem}
In this section, we classify all relaxed weak post-Hopf algebraic structures on the Sweedler 4-dimensional Hopf algebra. 

Recall that Sweedler's 4-dimensional Hopf algebra $\mathbb{H}_{4}$ is generated by two elements $g$ and $\nu$  which satisfy
$$
g^{2}=1, \nu^{2}=0, g\nu+\nu g=0.
$$
The comultiplication, the antipode and the counit of  $\mathbb{H}_{4}$ are given by
$$
\Delta(g)=g\otimes g, \Delta(\nu)=g\otimes\nu+\nu\otimes 1, \varepsilon(g)=1, \varepsilon(\nu)=0, S(g)=g, S(\nu)=-g\nu.
$$

\begin{theorem}\label{thm}
	Each relaxed weak post-Hopf structure on Sweedler's 4-dimensional Hopf algebra $\mathbb{H}_{4}$ has one form of the followings: 
	\begin{itemize}
		\item [(i)]$$\begin{tabular}{c|cccc}
			
			$\rhd $	& 1& g & $\nu$ & g$\nu$ \\
			\hline
			1	& 1 & g & $\nu$ & g$\nu$ \\
			
			g	& 1 & g & -$\nu$ &-g$\nu$  \\
			
			$\nu$	&  0& 0 & a$\nu$ &  ag$\nu$\\
			
			g$\nu$	& 0 &  0& a$\nu$ & ag$\nu$ \\
			
		\end{tabular}$$
		where $a$ is a parameter.
		\item [(ii)]
		$$\begin{tabular}{c|cccc}
			
			$\rhd $	& 1& g & $\nu$ & g$\nu$ \\
			\hline
			1	& 1 & g & $\nu$ & g$\nu$ \\
			
			g	& 1 & 1& 0 &0  \\
			
			$\nu$	&  0& a-ag & -a$\nu$ &  -ag$\nu$\\
			
			g$\nu$	& 0 &  a-ag& -a$\nu$ & -ag$\nu$ \\
			
		\end{tabular}$$
		where $a$ is a parameter.
		\item [(iii)]
		$$\begin{tabular}{c|cccc}
			
			$\rhd $	& 1& g & $\nu$ & g$\nu$ \\
			\hline
			1	& 1 & g & $\nu$ & g$\nu$ \\
			
			g	& 1 & g& $\nu$ & g$\nu$  \\
			
			$\nu$	&  0& 0 & 0 &  0\\
			
			g$\nu$	& 0 &0& 0 & 0\\
			
		\end{tabular}$$
		\item [(iv)]
	$$\begin{tabular}{c|cccc}
		
		$\rhd $	& 1& g & $\nu$ & g$\nu$ \\
		\hline
		1	& 1 & g & 0 &0 \\
		
		g	& 1 & g& 0 & 0  \\
		
		$\nu$	&  0& 0 & 0 &  0\\
		
		g$\nu$	& 0 &0& 0 & 0\\
		
	\end{tabular}$$
		\item [(v)]
	$$\begin{tabular}{c|cccc}
		
		$\rhd $	& 1& g & $\nu$ & g$\nu$ \\
		\hline
		1	& 1 & g & 0 &0 \\
		
		g	& 1 & 1& 0 & 0  \\
		
		$\nu$	&  0& 0 & 0 &  0\\
		
		g$\nu$	& 0 &0& 0 & 0\\
		
	\end{tabular}$$
		\item [(vi)]
	$$\begin{tabular}{c|cccc}
		
		$\rhd $	& 1& g & $\nu$ & g$\nu$ \\
		\hline
		1	& 1 & 1 & 0 &0 \\
		
		g	& 1 & 1& 0 & 0  \\
		
		$\nu$	&  0& 0 & 0 &  0\\
		
		g$\nu$	& 0 &0& 0 & 0\\
		
	\end{tabular}$$
	\end{itemize}
\end{theorem}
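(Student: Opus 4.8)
The plan is to exploit that $\rhd\colon \mathbb{H}_4\otimes\mathbb{H}_4\to\mathbb{H}_4$ is a coalgebra homomorphism, so it is completely determined by the $16$ values $x\rhd y$ with $x,y$ ranging over the basis $\{1,g,\nu,g\nu\}$, and every such value is already heavily constrained before the axioms \eqref{eq1} and \eqref{eq2} are invoked. Indeed, applying $\Delta$ to $x\rhd y$ and using that $g$ is group-like while $\nu\in P_{g,1}(\mathbb{H}_4)$ and $g\nu\in P_{1,g}(\mathbb{H}_4)$, one finds that each entry must lie in a prescribed $(p,q)$-primitive subspace. The first task is thus to compute these subspaces; I would record that $P_{1,1}=0$, $P_{g,g}=0$, $P_{g,1}=\mathbf{k}(g-1)\oplus\mathbf{k}\nu$ and $P_{1,g}=\mathbf{k}(1-g)\oplus\mathbf{k}g\nu$. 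This alone forces many entries to vanish or to depend on at most one or two scalar parameters.

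Next I would shrink the list of unknowns using \eqref{eq1} together with the relations of $\mathbb{H}_4$. Identity \eqref{eq9} disposes of the entire column $x\rhd 1=\varepsilon(x)1$. Writing $g\nu=g\cdot\nu$ and applying \eqref{eq1} gives $x\rhd(g\nu)=(x_1\rhd g)(x_2\rhd\nu)$, so the whole $g\nu$-column is expressed through the $g$- and $\nu$-columns and need not be solved for independently. Finally, feeding $\nu^2=0$ into \eqref{eq1} (for instance $(1\rhd\nu)^2=1\rhd\nu^2=0$) and using $g^2=1$, $g\nu+\nu g=0$ forces $1\rhd\nu$ and $g\rhd\nu$ to be scalar multiples $s\nu$, $t\nu$ of $\nu$, and likewise confines $\nu\rhd\nu$ to $\mathbf{k}\nu$.

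I would then treat the group-like block $\{1,g\}\times\{1,g\}$ on its own. Since $x\rhd(-)$ is multiplicative on group-likes by \eqref{eq1}, it is a group endomorphism of $\{1,g\}\cong\mathbb{Z}/2$, whence $1\rhd g$ and $g\rhd g$ each lie in $\{1,g\}$; evaluating \eqref{eq2} at $(x,y,z)=(g,g,g)$ then rules out the pair $(1\rhd g,\,g\rhd g)=(1,g)$, leaving precisely the three patterns $(g,g)$, $(g,1)$, $(1,1)$. Each pattern is resolved by substituting basis triples into \eqref{eq2}: the triple $(1,1,\nu)$ yields $s^{2}=s$, the triple $(g,g,\nu)$ yields $t^{2}=s$, the triples $(\nu,g,\nu)$ and $(g\nu,g,\nu)$ link $\nu\rhd\nu$ to $g\nu\rhd\nu$, and, crucially, the triple $(\nu,\nu,\nu)$ produces the relation $q^{2}(1+t)=0$ for $\nu\rhd\nu=q\nu$, which forces $\nu\rhd\nu=0$ when $t=1$ but permits a free parameter when $t=-1$. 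Assembling these discrete choices together with the surviving parameter reproduces tables (i)--(vi) from the $(g,g)$ and $(1,1)$ patterns.

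The main obstacle will be the pattern $(1\rhd g,\,g\rhd g)=(g,1)$, responsible for cases (ii) and (v): here $\nu\rhd g$ and $g\nu\rhd g$ genuinely range over the two-dimensional spaces $P_{1,g}$ and $P_{g,1}$ instead of collapsing to $0$, so the bookkeeping in \eqref{eq2} is heaviest, and several triples must be combined to annihilate the spurious $g\nu$- and $(g-1)$-components and isolate the one remaining parameter. A secondary but unavoidable chore is the converse verification: after the necessity argument I would substitute each of the six tables back into \eqref{eq1} and \eqref{eq2} (and check the coalgebra-homomorphism property) to confirm that they are bona fide relaxed weak post-Hopf structures, so that the classification is exact.
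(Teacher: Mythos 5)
Your plan is essentially the paper's own proof: the same use of $x\rhd 1=\varepsilon(x)1$, the same placement of each entry $x\rhd y$ into a group-like or $(p,q)$-primitive subspace via the coalgebra-homomorphism condition, the same reduction of the $g\nu$-column through multiplicativity, and the same case analysis resolved by substituting basis triples into the second defining axiom --- the paper merely nests the cases as $1\rhd g$, then the coefficient $a$ of $1\rhd\nu$, then $g\rhd g$ (outsourcing the subcase $1\rhd\nu=\nu$ to the classification in [4]) instead of splitting on the pair $(1\rhd g,\,g\rhd g)$. Your one refinement, reading $1\rhd g,\,g\rhd g\in G(\mathbb{H}_4)$ directly off group-likeness rather than via the paper's coordinate computation for $1\rhd g$, is sound; just note that in the degenerate subcase $1\rhd\nu=g\rhd\nu=0$ the relation $\nu^{2}=0$ alone does not annihilate the $(1-g)$-component of $\nu\rhd\nu$, so you must invoke the triple $(g,\nu,\nu)$ in the second axiom there, which your ``substitute basis triples'' step covers in any case.
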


\begin{proof}
 Using the equation (\ref{eq9}), we can obtain 
$$
1\rhd 1=x,
g\rhd 1=1,  \nu\rhd 1=0=g\nu \rhd 1. 
$$

Notice easily that 
$$G(\mathbb{H}_{4})=\{1,g\},\, P_{1,1}(\mathbb{H}_{4})=\{0\},\, P_{g,1}(\mathbb{H}_{4})=\mathbf{k}\nu+\mathbf{k}(1-g), $$
$$P_{1,g}(\mathbb{H}_{4})=\mathbf{k}g\nu+\mathbf{k}(1-g),\, P_{g,g}(\mathbb{H}_{4})=\{0\}.$$

Since 
$$
\Delta(g\rhd g)=g\rhd g\otimes g\rhd g,
$$
\begin{eqnarray*}
	\Delta(g\rhd \nu )&=&(g\rhd \nu)\otimes (g\rhd 1)+(g\rhd g)\otimes (g\rhd \nu)\\
	&=&(g\rhd \nu)\otimes 1+(g\rhd g)\otimes (g\rhd \nu), 
\end{eqnarray*}
we have 
$$g\rhd g \in G(\mathbb{H}_{4}), g\rhd \nu \in P_{ g\rhd g,1}(\mathbb{H}_{4}),$$ 
then $g\rhd g=1$ or $g\rhd g=g$.

Let 
$$
1\rhd g=t_{1}1+ t_{2} g+ t_{3} \nu + t_{4} g \nu.
$$  
That $1 \rhd 1=1 \rhd (gg)=(1\rhd g)(1\rhd g)$ yields 

$$t_{1}^{2}+t_{2}^{2}=1, t_{1}t_{2}=t_{1}t_{3}=t_{1}t_{4}=0$$

\noindent \textbf{Case 1.}
If $t_{1}\neq 0$, then $t_{2}=t_{3}=t_{4}=0$, $t_{1}=1$ or $t_{1}=-1$.  We can exclude $t_{1}=-1$. The reasons are as follows:  If $t_{1}=-1$, then $1\rhd g=-1$, 
$$
\Delta(1\rhd g)=\Delta(-1)=-1\otimes 1,
$$
and 
$$
(\rhd\otimes \rhd) \Delta(1\otimes g)
=(\rhd\otimes \rhd) (1\otimes g\otimes 1\otimes g)=1\rhd g\otimes 1\rhd g=1\otimes 1,
$$
which shows that $\rhd$ is not a coalgebra homomorphism. 
Thus   $1\rhd g=1$.   
If $g\rhd g=g$, then
$$
g\rhd (1\rhd g)=g\rhd 1=1
$$
and 
$$
(g(g\rhd 1))\rhd g=g\rhd g=g,
$$
which does not satisfy the condition (2.2). 
Thus $g\rhd g=1$,then $g\rhd \nu \in P_{1,1}(\mathbb{H}_{4})$, i.e., $g\rhd \nu =0$. Thus 
\begin{eqnarray*}
	g\rhd (g\nu)=(g\rhd g)(g\rhd \nu)=0.
\end{eqnarray*} 
That $$
0=\nu \rhd 1=\nu \rhd (gg)=(\nu \rhd g)(1\rhd g)+(g\rhd g)(\nu\rhd g)=2(\nu\rhd g),
$$
follows 
$
 \nu\rhd g=0.
$
Since 
\begin{eqnarray*}
	\Delta(\nu\rhd \nu)&=&(\nu\rhd \nu)\otimes(1\rhd 1)+(g\rhd \nu)\otimes(\nu\rhd 1)\\
	&&+(\nu\rhd g)\otimes(1\rhd \nu)+(g\rhd g)\otimes(\nu\rhd \nu)\\
	&=&(\nu\rhd \nu)\otimes 1+1\otimes(\nu\rhd \nu), 
\end{eqnarray*}
we have $\nu\rhd \nu \in P_{1,1}(\mathbb{H}_{4})$, i.e., $\nu \rhd \nu =0$.
Thus 
$$
\nu\rhd (g\nu)=(g\rhd g)(\nu\rhd\nu)+(\nu\rhd g)(1\rhd\nu)=0.
$$
Since 
$$
\Delta(1\rhd \nu)=1\rhd g\otimes 1\rhd \nu + 1\rhd \nu \otimes 1 =1\otimes 1\rhd \nu + 1\rhd \nu \otimes 1,
$$
we have $1\rhd \nu \in P_{1,1}(\mathbb{H}_{4})$, i.e., $1 \rhd \nu =0$.
Similarly, we have $$1\rhd g\nu =0, g\nu\rhd \nu=0, g\nu\rhd g\nu=0.$$

\noindent \textbf{Case 2.} If  $t_{1}= 0$, then $t_{2}\neq 0$. Thus $1\rhd g= t_{2} g+ t_{3} \nu + t_{4} g \nu$. Using
$$
\Delta(1\rhd g)=(\rhd \otimes \rhd )\Delta(1\otimes g),
$$
we can obtain
$$
t_{2}=1, t_{3}=t_{4}=0.
$$
Thus
$
1\rhd g=g.
$
Since 
$$
\Delta(1\rhd \nu)=1\rhd g\otimes 1\rhd \nu + 1\rhd \nu \otimes 1 =g\otimes 1\rhd \nu + 1\rhd \nu \otimes 1,
$$
we have  $1\rhd \nu\in P_{g,1}(\mathbb{H}_{4})$. Let 
$$
1\rhd \nu=a\nu +b (1-g).
$$
That 
$$1\rhd (\nu\nu)=(1\rhd \nu)(1\rhd \nu)$$
yields $b=0$. 
Using 
$$
1 \rhd (1\rhd \nu)=(1(1\rhd 1))\rhd \nu=1\rhd \nu, 
$$
we have 
$$
a^{2}=a.
$$

\noindent \textbf{Case 2-1.}  If $a=0$, $1\rhd \nu =0$, and $1\rhd g\nu =(1\rhd g)(1\rhd \nu)=0$.

\noindent \textbf{Case 2-1-1.}  If $g\rhd g=1$, $g\rhd \nu =0$ (see case 1)  and \begin{eqnarray*}
	g\rhd (g\nu)=(g\rhd g)(g\rhd \nu)=0.
\end{eqnarray*} 
Since 
\begin{eqnarray*}
	\Delta(\nu\rhd g)&=&(\nu\rhd g)\otimes (1\rhd g)+(g\rhd g)\otimes (\nu\rhd g)\\
	&=&(\nu\rhd g)\otimes g+1\otimes (\nu\rhd g), 
\end{eqnarray*}
we have $\nu\rhd g\in P_{1,g}(\mathbb{H}_{4})$. Then there exist  $x, y\in \mathbf{k}$ such that 
$$
\nu\rhd g=x(1-g)+yg\nu.
$$
By using 
$$
0=\nu \rhd 1=\nu \rhd (gg)=(v\rhd g)(1\rhd g)+(g\rhd g)(\nu\rhd g)=(v\rhd g)g+\nu\rhd g,
$$
we have 
$$
-y\nu+yg\nu=0,
$$
which yields $y=0$. Thus $$
\nu\rhd g=x(1-g).$$
That 
$$
1\rhd (\nu \rhd g)=(1\rhd \nu )\rhd g=0
$$
yields 
$x=0$, i.e., $\nu\rhd g=0$. Thus $\nu \rhd \nu =0, \nu \rhd g\nu=0$. (see case 1). Similarly, we have 
$$
g\nu\rhd g=0, g\nu\rhd \nu =0, g\nu \rhd g\nu =0.
$$

\begin{eqnarray*}
0=\nu\rhd (g\rhd \nu)=(\nu(1\rhd g)+g(\nu\rhd g))\rhd \nu= -g\nu\rhd \nu
\end{eqnarray*}
\begin{eqnarray*}
p^{2}\nu	=\nu\rhd (\nu\rhd \nu)=(\nu(1\rhd \nu)+g(\nu\rhd \nu))\rhd \nu=p (g\nu\rhd \nu)=0
\end{eqnarray*}

\noindent \textbf{Case 2-1-2.}   If $g\rhd g=g$, then $g\rhd \nu \in P_{g,1}(\mathbb{H}_{4})$, i.e., there exist $c, d\in \mathbf{k}$ such that  
$$
g\rhd \nu =c\nu + d(1-g). 
$$
Since 
$$
g\rhd (g\rhd \nu)=(g(g\rhd g))\rhd \nu=g^{2}\rhd \nu=1\rhd \nu=0,
$$
one has
$$
c^{2}=0, (c+1)d=0, 
$$
i.e., $ c=d=0$.
So 
$
g\rhd \nu=0, $ moreover 
\begin{eqnarray*}
	g\rhd (g\nu)=(g\rhd g)(g\rhd \nu)=0.
\end{eqnarray*} 
Since 
$$
\Delta(\nu\rhd g)=(\nu\rhd g)\otimes (1\rhd g)+(g\rhd g)\otimes (\nu\rhd g)=(\nu\rhd g)\otimes g+g\otimes (\nu\rhd g),
$$
one has $\nu\rhd g \in  P_{g,g}(\mathbb{H}_{4})$ and thus $\nu\rhd g=0$. 

Since 
\begin{eqnarray*}
	\Delta(\nu\rhd \nu)&=&(\nu\rhd \nu)\otimes(1\rhd 1)+(g\rhd \nu)\otimes(\nu\rhd 1)\\
	&&+(\nu\rhd g)\otimes(1\rhd \nu)+(g\rhd g)\otimes(\nu\rhd \nu)\\
	&=&(\nu\rhd \nu)\otimes 1+g\otimes(\nu\rhd \nu),
\end{eqnarray*}
it follows that $\nu\rhd \nu\in P_{g,1}(\mathbb{H}_{4})$, i.e., there exist $p, q\in \mathbf{k}$ such that 
$$
\nu\rhd \nu =p\nu +q(1-g).
$$
By using 
$$
g\rhd (\nu \rhd \nu)=(g(g\rhd \nu))\rhd \nu=0,
$$
we have $
q=0
$
and $
\nu\rhd \nu =p\nu.
$
Since
\begin{eqnarray*}
0=\nu\rhd (g\rhd \nu)=(\nu(1\rhd g)+g(\nu\rhd g))\rhd \nu= -g\nu\rhd \nu
\end{eqnarray*}
and 
$$
p^{2}\nu=\nu\rhd (\nu\rhd \nu)=(\nu(1\rhd \nu)+g(\nu\rhd \nu))\rhd \nu=pg\nu\rhd \nu=0,
$$
we have 
$$
p=0,\nu \rhd \nu =0.
$$
Thus 
$$
\nu\rhd (g\nu)=(g\rhd g)(\nu\rhd\nu)+(\nu\rhd g)(1\rhd\nu)=0.
$$
Similarly, we have $$g\nu\rhd g=0, g\nu\rhd g\nu =0.$$

\noindent \textbf{Case 2-2.}  If $a=1$, then $1\rhd \nu =\nu$, $1\rhd g\nu=(1\rhd g)(1\rhd \nu)=g\nu$. We can repeat the proof of Theorem 3.1 in [4].
\end{proof}
\begin{remark}
	Theorem 1 gives us a surprise. After removing the unitality condition, the Sweedler Hopf algebra not only retains its original weak post-Hopf algebra structures, but also acquires two additional structures.
\end{remark}

{\bf Statement:} There are no conflict of interests!


\begin{thebibliography}{13}




\bibitem{Vallette}  Vallette, B.: Homology of generalized partition posets. J. Pure Appl. Algebra \textbf{208},	699-725(2007)

\bibitem{Curry} Curry, C.,  Ebrahimi-Fard, K., Owren, B.: The Magnus expansion and post-Lie algebras. Math.  Comp. \textbf{89},  2785-2799(2020) 
\bibitem{Munthe-Kaas}  Munthe-Kaas, H. Z.,  Lundervold, A.: On post-Lie algebras, Lie–Butcher series and moving frames. Found Comput. Math. \textbf{13},  583–-613(2013).
\bibitem{Chen} Chen, Q. G., Yang, Y. N., Wang, D. G.: Weak post-Hopf algebra structures on Sweedler's 4-dimensional Hopf algebra.  J. Algebra Appl., https://doi.org/10.1142/S0219498826501719.
\bibitem{Li} Li, Y. N., Sheng, Y. H., Tang, R.:  Post-Hopf algebras, relative Rota–Baxter operators and solutions to the Yang-Baxter
equation. J. Noncommut Geom. \textbf{18}, 605–-630(2024)
REFERENCES
\bibitem{caenepeel}   Caenpeel, S., Janssen, K. Partial(Co)Actions of hopf algebras and
Partial Hopf-Galois Theory. Comm. Algebra \textbf{36}, 2923--2946(2008).
\end{thebibliography}
\end{document}